\tikzstyle{vertex}=[circle,fill=green!25, draw=black,minimum size=20pt,inner sep=0pt]
\tikzstyle{nvertex}=[circle,fill=green!25, draw=black,minimum size=0pt,inner sep=0pt]
\tikzstyle{edge} = [draw,thick,-]
\tikzstyle{math}=[]
\tikzstyle{tedge} = [draw,line width=0.9mm,color=blue,-]
\tikzstyle{enode}=[circle, draw, fill=black!50, inner sep=0pt, minimum width=6pt]
\begin{document}

\newtheorem{mydef}{Definition}
\newtheorem{lem}{Lemma}
\newtheorem{cor}{Corollary}
\newtheorem{thm}{Theorem}
\renewcommand{\emptyset}{\varnothing}
\newcommand{\W}{\mathbb W}


\title{{\textbf {Bivariate Domination Polynomial}}}
\author{
James Preen\\
\small Mathematics,\\[-0.8ex]
\small Cape Breton University,\\[-0.8ex]
\small Sydney, Nova Scotia, B1P 6L2, Canada.\\
\small\tt james\_preen@capebretonu.ca\\
\and
Alexander Murray \\
\small Karlsruher Institut f{\"u}r Technologie,\\[-0.8ex]
\small Hermann-von-Helmholtz-Platz 1,\\[-0.8ex]
\small Eggenstein-Leopoldshafen, Germany.\\
\small\tt alexander.murray@kit.edu\\
}
\maketitle

\begin{abstract}
We introduce a new bivariate polynomial ${\displaystyle J(G; x,y):=\sum\limits_{W \in V(G)} x^{|W|}y^{|N(W)|}}$ which contains the
standard domination polynomial of the graph $G$ in two different ways. We build methods for efficient calculation of this polynomial and prove that there are still some families of graphs which have the same bivariate polynomial.
\end{abstract}

\section{{Introduction}}
Every graph $G$ within this paper will be assumed to be simple and undirected, and have vertex set $V(G)$ and edge set $E(G)$. The closed neighbourhood of a set $W$ of vertices is denoted $N_G[W]$ and includes all vertices which are either in $W$ or are joined to a vertex in $W$ (or both). A dominating set in $G$ is a subset of vertices $W$ such that $N_G[W]=V(G)$.

We define the external neighbourhood of any $W\subseteq V(G)$ as  $N_G(W):= N_G[W] - W$. Note that the latter definition is not necessarily the same as $\bigcup_{w\in W} N_G(w)$, which has been used as $N_G(W)$ in other related articles such as \cite{ar:ktree}. We will use $N[W]$ and $N(W)$ instead of $N_G[W]$ and $N_G(W)$ where the underlying graph can be inferred. The valency of vertex $v$ is the cardinality of the set $N(\{v\})$, also called the degree of $v$, but that terminology is avoided as we are dealing with polynomials which also have degrees.

The domination polynomial ${\displaystyle D(G,t):=\sum\limits_{\stackrel{W \in V(G);}{N[W]=V(G)}} t^{|W|}}$ has been used by many authors (e.g. \cite{ar:AlikhaniPengCertainII2010}, \cite{ar:KPT1}) to study many domination properties of graphs. 

We introduce a second variable and perform the summation over all vertex subsets of the graph as follows:
\begin{mydef}
$J(G; x,y):=\sum\limits_{W \in V(G)} x^{|W|}y^{|N(W)|}$, and we refer to it as $J(G)$ when $x$ and $y$ are assumed to be the variables in the polynomial.
\end{mydef}

Note that $D(G,t)$ is represented within this polynomial as the coefficient of $y^{|V(G)|}$ in $J(G,yt,y)=\sum\limits_{W \in V(G)} t^{|W|}y^{|N[W]|}$, but also, as shown in \cite{ar:dohtitt}, as
\begin{equation}
D(G,t) = (1+t)^{|V(G)|} J(G,\frac{-1}{1+t},\frac{1}{1+t})\label{e:domxpy}
\end{equation}

The domination polynomial has also been generalised to a three variable bipartition polynomial in \cite{ar:bipartpol} and the authors have shown that $J(G; x,y) = B(G;x,1-y,-1)$.
Any disconnected graph with components $G_1$ and $G_2$, by the analagous result in \cite{ar:bipartpol}, satisfies
\begin{equation}
J(G_1\cup G_2) = J(G_1)  J(G_2)\label{e:disco}
\end{equation}

As in \cite{ar:KPT1} we will use conditional polynomials to build up expressions for $J(G;x,y)$:
\begin{mydef}
$J( G | c(W) )$ is the polynomial which contributes a non-zero term only when condition $c(W)$ is additionally satisfied by the subset $W$ of the vertices of $G$.
\end{mydef}

As introduced in \cite{ar:MW} we define the following operation on any vertex $v$ in a graph $G$:
\begin{mydef}
The vertex contraction $G\backslash v$ is the graph resulting from adding edges between all vertices in $N_G(v)$ to $G$ and then deleting $v$.
\end{mydef}

\section{A Recurrence for the Bivariate Polynomial}\label{s:recur}

We will gradually built expressions for conditional polynomials of a graph, before combining them to give non-conditional expressions involving subgraphs. We start with the condition of a vertex $a$ being in the set $W$:

\begin{lem}\label{l:ainwe}
Given any vertex $a$ in $G$ we have
\[
J(G|a \in W, N_G(a) \cap W = \emptyset)=xy^{|N_G(a)|}J(G-N_G[a])
\]
\end{lem}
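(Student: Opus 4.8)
The plan is to reparametrise the summation defining the left-hand side using the bijection $W \mapsto W' := W - \{a\}$. Under the two imposed conditions, $W$ must contain $a$ and must avoid every vertex of $N_G(a)$, so $W'$ ranges over exactly the subsets of $V(G) - N_G[a] = V(G - N_G[a])$, and conversely each such $W'$ recovers an admissible set $W = \{a\} \cup W'$. This identifies the admissible $W$ with the arbitrary subsets summed over in $J(G - N_G[a])$ and gives $|W| = 1 + |W'|$ at once, which will produce the leading factor of $x$.

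The crux is to prove that the exponent of $y$ splits additively as $|N_G(W)| = |N_G(a)| + |N_{G - N_G[a]}(W')|$. I would partition $V(G)$ into the three disjoint blocks $\{a\}$, $N_G(a)$, and $R := V(G) - N_G[a]$. Because $N_G(a) \cap W = \emptyset$ and $a \notin N_G(a)$, every vertex of $N_G(a)$ lies outside $W$ yet is adjacent to $a \in W$, so the entire block $N_G(a)$ sits inside $N_G(W)$ and contributes the factor $y^{|N_G(a)|}$. The vertex $a$ itself, being in $W$, contributes nothing to the external neighbourhood $N_G(W) = N_G[W] - W$.

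For the remaining contribution, a vertex $u \in R - W'$ lies in $N_G(W)$ precisely when it is adjacent in $G$ to some vertex of $W \subseteq \{a\} \cup R$; adjacency to $a$ is impossible since $u \in R$, so this reduces to adjacency to $W'$. Here I would invoke the fact that $G - N_G[a]$ is the induced subgraph on $R$, so edges among vertices of $R$ agree in $G$ and in $G - N_G[a]$; hence $u$ is counted exactly when $u \in N_{G-N_G[a]}(W')$. Since this contribution lies in $R$, which is disjoint from $N_G(a)$, the two blocks combine without overlap, yielding the claimed split of $|N_G(W)|$.

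With both exponents decomposed, the admissible part of $J(G)$ becomes $\sum_{W' \subseteq R} x^{1+|W'|} y^{|N_G(a)| + |N_{G-N_G[a]}(W')|}$, and pulling the constant $x y^{|N_G(a)|}$ out of the sum leaves exactly $\sum_{W' \subseteq R} x^{|W'|} y^{|N_{G-N_G[a]}(W')|} = J(G - N_G[a])$, as required. The one step demanding genuine care is the neighbourhood decomposition: one must verify that $N_G(W)$ partitions cleanly into the $N_G(a)$ block and the induced-subgraph contribution from $R$, with nothing double-counted and nothing omitted — in particular that vertices of $N_G(a)$ which also happen to be adjacent to $W'$ are still counted only once.
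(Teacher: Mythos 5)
Your proposal is correct and follows essentially the same route as the paper's proof: the paper likewise observes that $a$ contributes the factor $x$, that every vertex of $N_G(a)$ is dominated by $a$ and excluded from $W$ (hence the factor $y^{|N_G(a)|}$), and that the remaining vertices reduce to the induced subgraph $G-N_G[a]$. You have simply made the bijection $W\mapsto W-\{a\}$ and the additive splitting of $|N_G(W)|$ explicit, which the paper leaves informal.
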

\begin{proof}
Since $a \in W$ all neighbours of $a$ will necessarily be dominated and by definition are not in $W$ so cannot dominate any others.
Hence the contribution of these vertices to the polynomial is a factor of $xy^{|N_G(a)|}$ and the remaining vertices in the graph $G-N[a]$ will have to depend on themselves to dominate or be dominated.
\end{proof}

\begin{lem}\label{l:ainwn}
Given any vertex $a$ in $G$ we have
\[
J(G|a \in W, N_G(a) \cap W \neq \emptyset)=x\left(J(G\backslash a)-J(G|N_G[a]\cap W = \emptyset ) \right)
\]
\end{lem}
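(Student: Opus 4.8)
The plan is to partition the sum defining $J(G|a \in W, N_G(a) \cap W \neq \emptyset)$ according to the vertex $a$ and then show that, under the stated condition, the vertex contraction $G\backslash a$ exactly reproduces the external neighbourhoods. Since every set $W$ counted here contains $a$, I would write $W = \{a\} \cup W'$ with $W' \subseteq V(G)-\{a\}$, so that $|W| = |W'|+1$ and the membership of $a$ contributes a single, uniform factor of $x$. The condition $N_G(a) \cap W \neq \emptyset$ becomes $N_G(a) \cap W' \neq \emptyset$, i.e.\ $W'$ must contain at least one neighbour of $a$. The remaining task is to compare the $y$-exponent $|N_G(W)|$ with a neighbourhood computed in $G\backslash a$.

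The crux is the identity $N_G(\{a\}\cup W') = N_{G\backslash a}(W')$ whenever $N_G(a)\cap W' \neq \emptyset$. In $G$, the vertex $a\in W$ dominates all of $N_G(a)$, so every vertex of $N_G(a)-W'$ lies in $N_G(W)$; in $G\backslash a$ the vertices of $N_G(a)$ form a clique, so a single vertex $w_0 \in N_G(a)\cap W'$ dominates every other vertex of $N_G(a)$, placing exactly the same vertices into $N_{G\backslash a}(W')$. For vertices reached through ordinary edges of $G$ not incident to $a$, the two neighbourhoods agree trivially, and $a$ itself belongs to neither. Checking both inclusions carefully gives $|N_G(W)| = |N_{G\backslash a}(W')|$, and hence
\[
J(G\,|\,a \in W,\ N_G(a) \cap W \neq \emptyset) = x\, J\bigl(G\backslash a \,\big|\, N_G(a)\cap W \neq \emptyset\bigr).
\]
I expect this neighbourhood equality to be the main obstacle: it is precisely where the condition $N_G(a)\cap W'\neq\emptyset$ is indispensable, since without a neighbour of $a$ in $W'$ the added clique edges dominate nothing and the contraction fails to mimic the domination that $a$ provides in $G$.

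To finish, I would rewrite the conditional polynomial on $G\backslash a$ as a complement, $J(G\backslash a \,|\, N_G(a)\cap W \neq \emptyset) = J(G\backslash a) - J(G\backslash a \,|\, N_G(a)\cap W = \emptyset)$, and then identify the subtracted term with $J(G\,|\,N_G[a]\cap W = \emptyset)$. Both of these sums range over exactly the same sets, namely $W \subseteq V(G) - N_G[a]$, and for such $W$ neither $a$ nor any vertex of $N_G(a)$ lies in $W$; consequently the deletion of $a$ and the addition of the clique on $N_G(a)$ change no adjacency between $W$ and the rest of the graph, so $N_G(W) = N_{G\backslash a}(W)$ term by term. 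Substituting this equality yields $x\bigl(J(G\backslash a) - J(G\,|\,N_G[a]\cap W = \emptyset)\bigr)$, which is the claimed expression.
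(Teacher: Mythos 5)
Your proof is correct and follows essentially the same route as the paper: factor out $x$ for the vertex $a$, identify the remaining sum with $J(G\backslash a \,|\, N_G(a)\cap W\neq\emptyset)$ via the neighbourhood identity, take the complement, and observe that the added clique edges and the deletion of $a$ are invisible once $N_G[a]\cap W=\emptyset$. If anything, your statement of the key identity $N_G(W)=N_{G\backslash a}(W-\{a\})$ is more careful than the paper's, which records it (apparently with a slip) as $N_G(W)=\{a\}\cup N_{G\backslash a}(W)$.
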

\begin{proof}
Every set of vertices $\{ a \in W, N_G(a) \cap W \neq \emptyset \}$ can be counted by $x(J(G\backslash a | N_G[a]\cap W \neq \emptyset )$ since if $a$ is in $W$ and some vertices in $N_G(a)$ are also in $W$ then $N_G(W)=\{a\} \cup N_{G\backslash a}(W)$ so we must also multiply by $x$ to account for the $a \in V(G)$ that is removed in $G \backslash a$ so that
\begin{eqnarray*}
 J(G|a \in W, N_G(a) \cap W \neq \emptyset) &=&  x(J(G\backslash a | N_G(a)\cap W \neq \emptyset ) \\
&=& x(J(G\backslash a) - J(G\backslash a | N_G(a)\cap W = \emptyset ) ) \\
&=& x(J(G\backslash a) - J(G- a | N_G(a)\cap W = \emptyset ) ) \\
&=& x(J(G\backslash a)- J(G|N_G[a]\cap W = \emptyset )),
\end{eqnarray*}
which is the answer required.
\end{proof}

Combining these two Lemmas we get a simpler expression:

\begin{cor}\label{c:ainw}
For any vertex $a$ in $G$, the entries for the bivariate domination polynomial which correspond to sets containing $a$ is:
\[
J(G|a \in W)=
y^{|N_G(a)|}xJ(G-N_G[a])+ x\left( J(G\backslash a)- J(G|N_G[a]\cap W = \emptyset ) \right)
\]
\end{cor}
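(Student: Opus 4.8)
The plan is to exploit the fact that the two preceding Lemmas already dispose of complementary subcases, so that the Corollary follows by simply adding them. First I would observe that every subset $W$ of $V(G)$ with $a \in W$ falls into exactly one of the two classes determined by the predicate $N_G(a) \cap W = \emptyset$: either this predicate holds, or it fails. These two conditions are therefore mutually exclusive, and together they exhaust all of $\{\,W : a \in W\,\}$.

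Next I would invoke the additivity of conditional polynomials over a partition of their index set. By the definition of $J(G|c(W))$, each vertex subset $W$ contributes the single monomial $x^{|W|}y^{|N_G(W)|}$ exactly when it satisfies $c(W)$. Since the two conditions above partition $\{\,W : a \in W\,\}$, summing the two conditional polynomials reproduces precisely the sum over all $W$ containing $a$, with no term counted twice and none omitted; that is,
\[
J(G|a \in W) = J(G|a \in W, N_G(a) \cap W = \emptyset) + J(G|a \in W, N_G(a) \cap W \neq \emptyset).
\]

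Finally I would substitute the closed forms supplied by Lemma \ref{l:ainwe} and Lemma \ref{l:ainwn} into this identity and commute the scalar factors, so that the first summand reads $y^{|N_G(a)|}x\,J(G-N_G[a])$, which then matches the stated right-hand side verbatim.

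The only step requiring any care — and it is a genuinely minor one — is the additivity of conditional polynomials: one must confirm that the disjunction of the two conditions is exactly $a \in W$ and that no set $W$ is double-counted. Since the two conditions differ solely in the truth value of the single predicate $N_G(a) \cap W = \emptyset$, this is immediate, and I expect no real obstacle in the argument.
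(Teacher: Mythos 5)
Your proposal is correct and follows exactly the paper's own argument: partition the sets containing $a$ according to whether $N_G(a)\cap W$ is empty, use additivity of the conditional polynomial over this partition, and substitute Lemmas \ref{l:ainwe} and \ref{l:ainwn}. No issues.
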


\begin{proof}
Using Lemmas \ref{l:ainwe} and \ref{l:ainwn}:
\begin{eqnarray*}
J(G|a \in W)&=J(G|a \in W, N_G(a) \cap W = \emptyset)+J(G|a \in W, N_G(a) \cap W \neq \emptyset)\\
&=y^{|N_G(a)|}xJ(G-N_G[a])+ x\left( J(G\backslash a)- J(G|N_G[a]\cap W = \emptyset ) \right) 
\end{eqnarray*}
which gives us a reduction involving the polynomials of three smaller graphs, just one of which is conditional.
\end{proof}

Similarly, we can build the two cases of a vertex $a$ not being in the vertex subset $W$:
\begin{lem}\label{l:aninwe}
For any vertex $a$ of $G$
\[
J(G|a \not\in W, N_G(a) \cap W = \emptyset)=J(G|N_G[a] \cap W = \emptyset)
\]
\end{lem}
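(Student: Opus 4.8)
The plan is to observe that this identity is purely set-theoretic: the conditions appearing on the two sides select exactly the same family of vertex subsets $W$, so the corresponding conditional polynomials must agree term by term, and no reduction to a smaller graph is needed.

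First I would unpack the closed neighbourhood using its definition from the introduction, namely $N_G[a]=\{a\}\cup N_G(a)$. Intersecting with $W$ and distributing the intersection over the union gives $N_G[a]\cap W = (\{a\}\cap W)\cup(N_G(a)\cap W)$. Hence $N_G[a]\cap W=\emptyset$ holds precisely when both $\{a\}\cap W=\emptyset$ and $N_G(a)\cap W=\emptyset$, that is, precisely when $a\notin W$ and $N_G(a)\cap W=\emptyset$. This is exactly the compound condition on the left-hand side.

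Next I would appeal to the definition of the conditional polynomial $J(G\,|\,c(W))$: each such polynomial is the sum of the monomials $x^{|W|}y^{|N(W)|}$ over precisely those subsets $W$ for which $c(W)$ holds. Since the two conditions $\{a\notin W,\ N_G(a)\cap W=\emptyset\}$ and $\{N_G[a]\cap W=\emptyset\}$ are logically equivalent by the previous paragraph, they are satisfied by identical collections of subsets $W$, and so the two sums coincide monomial by monomial, giving the claimed equality.

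I do not expect a genuine obstacle here; the lemma is essentially a change of notation. The only point deserving care is the bookkeeping observation that $a\notin W$ is literally the statement $\{a\}\cap W=\emptyset$, so that the single closed-neighbourhood condition on the right absorbs both the membership restriction on $a$ and the external-neighbourhood restriction on the left. Once that decomposition is recorded, the equality is immediate and requires no manipulation of the exponents $|W|$ or $|N(W)|$.
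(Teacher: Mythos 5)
Your proposal is correct and follows exactly the paper's argument: the paper's one-line proof likewise observes that $N_G[a]=\{a\}\cup N_G(a)$ lets the two conditions on the left be combined into the single condition on the right. You simply spell out the set-theoretic equivalence in more detail.
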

\begin{proof}
By definition, $N_G[a] := \{ a \} \cup N_G(a)$, so we can combine the two conditions.
\end{proof}

As in Lemma \ref{l:ainwn} we can deal with the final case similarly:

\begin{lem}\label{l:aninwn}
For any vertex $a$ of $G$ we have 
\[
J(G|a \not\in W, N_G(a) \cap W \neq \emptyset)=y(J(G-a)-J(G|N_G[a] \cap W = \emptyset))
\]
\end{lem}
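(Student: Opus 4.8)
The plan is to mirror the deletion argument used implicitly in Lemma~\ref{l:aninwe}, but now tracking the extra vertex that $a$ contributes to the external neighbourhood. First I would fix a subset $W$ satisfying both conditions $a\notin W$ and $N_G(a)\cap W\neq\emptyset$. Since some neighbour of $a$ lies in $W$, the vertex $a$ is dominated by $W$; and since $a\notin W$, this forces $a\in N_G(W)$. Thus every admissible $W$ contributes a single guaranteed factor of $y$ coming from $a$, which is exactly the $y$ prefactor on the right-hand side.

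Next I would compare the external neighbourhood of $W$ taken in $G$ with the one taken in $G-a$. Deleting $a$ only removes edges incident to $a$, so it cannot change whether any other vertex is adjacent to $W$; the only effect is that $a$ itself is no longer a vertex. Combined with the observation $a\in N_G(W)$ above, this gives $N_{G-a}(W)=N_G(W)\setminus\{a\}$ and hence $|N_G(W)|=|N_{G-a}(W)|+1$. Therefore each term $x^{|W|}y^{|N_G(W)|}$ equals $y\,x^{|W|}y^{|N_{G-a}(W)|}$, and summing over all admissible $W$ (which are precisely the subsets of $V(G-a)$ meeting $N_G(a)$) yields $J(G\,|\,a\notin W,\,N_G(a)\cap W\neq\emptyset)=y\,J(G-a\,|\,N_G(a)\cap W\neq\emptyset)$.

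It then remains to rewrite the conditional polynomial on $G-a$. I would split on the complementary condition, writing $J(G-a\,|\,N_G(a)\cap W\neq\emptyset)=J(G-a)-J(G-a\,|\,N_G(a)\cap W=\emptyset)$. For subsets counted by the last term no neighbour of $a$ lies in $W$, so $a\notin N_G(W)$ and deletion of $a$ leaves the external neighbourhood unchanged; hence this term equals $J(G\,|\,a\notin W,\,N_G(a)\cap W=\emptyset)$, which by Lemma~\ref{l:aninwe} is $J(G\,|\,N_G[a]\cap W=\emptyset)$. Substituting back produces the claimed identity.

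The main obstacle, and the only place needing genuine care, is the bookkeeping of the external neighbourhood under deletion: one must argue both that $a$ is in $N_G(W)$ exactly when a neighbour of $a$ is in $W$ (supplying the $y$) and that removing $a$ alters no other membership in $N(W)$. Both follow cleanly from the definition $N_G(W)=N_G[W]-W$, so I expect no serious difficulty beyond stating these observations precisely.
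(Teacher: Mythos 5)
Your proposal is correct and follows essentially the same route as the paper: extract the guaranteed factor of $y$ from $a$ being dominated but not in $W$, pass to $J(G-a\mid N_G(a)\cap W\neq\emptyset)$, complement the condition, and identify the empty-intersection term with $J(G\mid N_G[a]\cap W=\emptyset)$. You simply make explicit the neighbourhood bookkeeping that the paper leaves implicit.
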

\begin{proof}
Since there is a vertex 
in $W$ that is adjacent to $a$ and $a$ is not in $W$, $a$ must contribute a factor of $y$ to the polynomial and will not dominate any other vertices. 
Thus 
\begin{eqnarray*}
J(G|a \not\in W, N_G(a) \cap W \neq \emptyset)&=& y \left(J(G-a |  N_G(a) \cap W \neq \emptyset ) \right) \\
&=& y\left(J(G-a) - J(G-a |  N_G(a) \cap W = \emptyset ) \right) \\
&=& y\left(J(G-a) - J(G |  N_G[a] \cap W = \emptyset ) \right) 
\end{eqnarray*}
since we can re-use the fact that $a \not \in W$.
\end{proof}

Now, combining these two Lemmas we can see that:
\begin{cor}\label{c:anotinw}
For any vertex $a$ of $G$ we have:
\[
J(G|a \not\in W)=yJ(G-a)+(1-y)J(G|N_G[a] \cap W = \emptyset)
\]
\end{cor}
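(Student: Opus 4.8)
The plan is to mirror exactly the derivation of Corollary~\ref{c:ainw}, combining the two preceding lemmas. The key observation is that the single condition $a\not\in W$ partitions into two mutually exclusive and jointly exhaustive subcases according to whether $N_G(a)\cap W=\emptyset$ or $N_G(a)\cap W\neq\emptyset$: every subset $W$ avoiding $a$ either contains some neighbour of $a$ or contains none. Since the conditional polynomial $J(G\,|\,c(W))$ sums precisely those terms whose $W$ satisfies $c(W)$, additivity over a partition of the condition gives
\[
J(G|a\not\in W)=J(G|a\not\in W, N_G(a)\cap W=\emptyset)+J(G|a\not\in W, N_G(a)\cap W\neq\emptyset).
\]

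Next I would substitute the closed forms supplied by Lemmas~\ref{l:aninwe} and~\ref{l:aninwn} for the two summands respectively, obtaining
\[
J(G|a\not\in W)=J(G|N_G[a]\cap W=\emptyset)+y\bigl(J(G-a)-J(G|N_G[a]\cap W=\emptyset)\bigr).
\]
It then remains only to expand and collect the two copies of the conditional polynomial $J(G|N_G[a]\cap W=\emptyset)$, which combine with coefficient $1-y$, leaving the term $yJ(G-a)$ untouched and yielding the stated expression.

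The only point requiring any care --- and it is hardly an obstacle --- is justifying the additivity step, namely that the two subcases form a genuine partition of all $W$ with $a\not\in W$ and that no term is double-counted. This is immediate from the fact that $N_G(a)\cap W=\emptyset$ and $N_G(a)\cap W\neq\emptyset$ are complementary conditions, exactly as in the $a\in W$ analysis of Corollary~\ref{c:ainw}. The remaining manipulation is purely algebraic, so no further difficulty arises.
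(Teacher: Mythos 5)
Your proposal is correct and is essentially identical to the paper's own proof: both partition the condition $a\not\in W$ according to whether $N_G(a)\cap W$ is empty, substitute Lemmas~\ref{l:aninwe} and~\ref{l:aninwn}, and collect the two copies of $J(G|N_G[a]\cap W=\emptyset)$ into the coefficient $1-y$. Nothing further is needed.
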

\begin{proof}
Applying Lemmas \ref{l:aninwe} and \ref{l:aninwn} we get:
\begin{align*}
J(G|a \not\in W)&=J(G|a \not\in W, N_G(a) \cap W = \emptyset)+J(G|a \not\in W, N_G(a) \cap W \neq \emptyset)\\
&=J(G|N_G[a] \cap W = \emptyset)+y(J(G-a)-J(G|N_G[a] \cap W = \emptyset)) \\
&=yJ(G-a)+(1-y)J(G|N_G[a] \cap W = \emptyset)
\end{align*}
and we note that this involves the same condition as Corollary \ref{c:ainw}.
\end{proof}

This conditional polynomial can now be found in terms of non-conditional ones:

\begin{lem}\label{l:naempty}
For any vertex $a$ in $G$, if $x \not=1-y$, 
\[
J(G|N_G[a] \cap W = \emptyset)) = \frac{\left( J(G)-xy^{|N_G(a)|}J(G-N_G[a])-xJ(G\backslash a)-yJ(G-a) \right)}{1-x-y}
\]
\end{lem}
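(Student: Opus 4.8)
The plan is to exploit the partition of all vertex subsets according to whether the chosen vertex $a$ lies in $W$ or not, giving $J(G)=J(G|a\in W)+J(G|a\notin W)$, and then to substitute the two expansions already obtained in Corollaries \ref{c:ainw} and \ref{c:anotinw}. The crucial feature, already flagged at the close of Corollary \ref{c:anotinw}, is that the single conditional polynomial $J(G|N_G[a]\cap W=\emptyset)$ occurs in both expansions; writing $Q:=J(G|N_G[a]\cap W=\emptyset)$, it enters $J(G|a\in W)$ with coefficient $-x$ and enters $J(G|a\notin W)$ with coefficient $(1-y)$. So the strategy is to treat $Q$ as an unknown, collect its coefficients, and solve the resulting linear equation for it.

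Carrying this out, I would first write
\[
J(G)=\Bigl(y^{|N_G(a)|}xJ(G-N_G[a])+xJ(G\backslash a)-xQ\Bigr)+\Bigl(yJ(G-a)+(1-y)Q\Bigr).
\]
Then I would move the three non-conditional terms to one side and combine the two occurrences of $Q$, whose coefficients sum to $-x+(1-y)=1-x-y$. This rearranges to
\[
(1-x-y)\,Q=J(G)-xy^{|N_G(a)|}J(G-N_G[a])-xJ(G\backslash a)-yJ(G-a).
\]

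The final step is simply to divide by $1-x-y$, which is precisely where the hypothesis $x\neq 1-y$ is needed, since it is equivalent to $1-x-y\neq 0$ and so guarantees a nonzero denominator; this yields the stated closed form for $Q$. There is no genuine obstacle here beyond careful bookkeeping of the coefficient of $Q$: the substance of the lemma lives entirely in the earlier structural decompositions, and this step is merely the algebraic inversion that renders the conditional polynomial explicit. The only subtlety worth verifying is that the sign and the exponent in the term $xy^{|N_G(a)|}J(G-N_G[a])$ match what emerges from Corollary \ref{c:ainw} verbatim, so that the numerator reproduces the claimed expression exactly.
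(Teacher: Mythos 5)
Your proposal is correct and follows exactly the paper's own route: split $J(G)$ as $J(G|a\in W)+J(G|a\notin W)$, substitute Corollaries \ref{c:ainw} and \ref{c:anotinw}, observe that the conditional polynomial appears with net coefficient $1-x-y$, and divide using the hypothesis $x\neq 1-y$. Nothing further is needed.
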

\begin{proof}
We will begin by breaking up the general polynomial for $G$ into two cases with respect to $a$ being in $W$:
\begin{equation} J(G)=J(G|a \in W)+J(G|a \not\in W) \label{e:innotin} \end{equation}
From Corollaries \ref{l:ainwe} and \ref{l:ainwn} we know that this is:
\begin{eqnarray*}
 J(G)&=&xy^{|N(a)|}J(G-N[a])+ x\left( J(G\backslash a)- J(G|N[a]\cap W = \emptyset ) \right) \\
 &+&yJ(G-a)+(1-y)J(G|N[a] \cap W = \emptyset)
\end{eqnarray*}
We can then rearrange this to get the desired formula for $J(G|N[a] \cap W = \emptyset)$.
\end{proof}

We can combine Lemma \ref{l:naempty} and Corollary \ref{c:ainw} to see that 
for any vertex $a$ of $G$ we have:
\begin{equation}\label{e:ainw}
\frac{J(G|a \in W)}{x} = \frac{(1-y) J(G\backslash a) +yJ(G-a)+(1-y)y^{|N(a)|} J(G-N[a])  -J(G)} {1-x-y}
\end{equation}

We can now prove a general result for any graph $G$ which contains a vertex $v$ which is domination covered;
that is, there exists a vertex $u\in N_G(v)$ such that $N_G[v] \subseteq N_G[u]$.

\begin{thm}\label{t:domcov}
If $v$ is domination covered by $u$ in a graph $G$ then
\begin{eqnarray*}
J(G) &=& (x+y)J(G-v) -y(x+y)J(G-u-v) +yJ(G-u)\\
&&+(1-y)\left( J(G\backslash v) -yJ((G-u)\backslash v) -xJ((G-v)\backslash u)\right) \\
&&-x(1-y)y^{|N_G(u)|-1}J(G-N_G[u]).
\end{eqnarray*}
\end{thm}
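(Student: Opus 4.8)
The plan is to apply the single-vertex reduction underlying Lemma~\ref{l:naempty} to the vertex $u$, \emph{simultaneously} in $G$ and in $G-v$, and then subtract. Rearranging Lemma~\ref{l:naempty} (equivalently, adding Corollaries~\ref{c:ainw} and~\ref{c:anotinw}) gives, for any vertex $a$ of any graph $H$, the master identity
\[
J(H)=yJ(H-a)+xJ(H\backslash a)+xy^{|N_H(a)|}J(H-N_H[a])+(1-x-y)\,J(H\mid N_H[a]\cap W=\emptyset).
\]
Applying this with $a=u$ and $H=G$, and again with $a=u$ and $H=G-v$, produces two expansions whose only genuinely new ingredient is the conditional polynomial $J(\,\cdot\mid N[u]\cap W=\emptyset)$.

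The domination-covered hypothesis enters through two observations, both consequences of $uv\in E(G)$ together with $N_G[v]\subseteq N_G[u]$. First, since $v\in N_G(u)$ and $N_G[u]=\{u,v\}\cup(N_G(u)\setminus\{v\})$ already contains $v$, deleting $v$ removes exactly the edge $uv$, so $|N_{G-v}(u)|=|N_G(u)|-1$ and $(G-v)-N_{G-v}[u]=G-N_G[u]$; this is what manufactures the exponent $|N_G(u)|-1$ and the graph $G-N_G[u]$ in the statement. Second, and this is the crux, the two conditional polynomials coincide:
\[
J\big(G\mid N_G[u]\cap W=\emptyset\big)=J\big(G-v\mid N_{G-v}[u]\cap W=\emptyset\big).
\]
Indeed, any $W$ with $W\cap N_G[u]=\emptyset$ automatically misses $N_G[v]\subseteq N_G[u]$, so $v$ is undominated; hence $v\notin N_G(W)$ and deleting $v$ leaves every relevant external neighbourhood unchanged, while $v\in N_G[u]$ forces $v\notin W$ so no subsets are lost. \textbf{This asymmetric use of $N[v]\subseteq N[u]$ is the heart of the argument} and the step I expect to require the most care to state cleanly.

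Subtracting the two master expansions now cancels the conditional term exactly, leaving a closed relation expressing $J(G)$ through $J(G-v)$, $J(G-u)$, $J(G-u-v)$, the contraction polynomials, and $x(1-y)y^{|N_G(u)|-1}J(G-N_G[u])$. The final task is bookkeeping: collecting coefficients and reconciling the contraction terms that the $u$-reduction produces (namely $J(G\backslash u)$ and $J((G-v)\backslash u)$) with the combination printed in the statement, which is organised around the $v$-contractions $J(G\backslash v)$ and $J((G-u)\backslash v)$ inside the common factor $(1-y)$. I expect this reconciliation---verifying the elementary identity that converts between the two contraction forms, and checking the coefficients $(x+y)$ and $-y(x+y)$---to be the main obstacle in practice, even though it uses only the single-vertex lemmas already established; the conceptual content is entirely in the conditional-polynomial equality above.
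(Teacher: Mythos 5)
Your subtraction argument is sound as far as it goes. The ``master identity'' is a correct rearrangement of Lemma~\ref{l:naempty}, and your central claim that
\[
J\bigl(G\mid N_G[u]\cap W=\emptyset\bigr)=J\bigl(G-v\mid N_{G-v}[u]\cap W=\emptyset\bigr)
\]
is correctly justified: the admissible sets $W$ coincide because $v\in N_G[u]$, and $v$ is never dominated because $N_G(v)\subseteq N_G[u]$ is disjoint from $W$. Subtracting the two expansions at $u$ therefore yields the valid identity
\[
J(G)=J(G-v)+yJ(G-u)-yJ(G-u-v)+x\bigl(J(G\backslash u)-J((G-v)\backslash u)\bigr)-x(1-y)y^{|N_G(u)|-1}J(G-N_G[u]),
\]
which I have checked on small examples and which follows rigorously from your two observations.

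The gap is that this is not the statement to be proved. Your identity contains $J(G\backslash u)$ and omits $J(G\backslash v)$ and $J((G-u)\backslash v)$ entirely; the theorem does the opposite, and its coefficients of $J(G-v)$ and $J(G-u-v)$ are $(x+y)$ and $-y(x+y)$ rather than $1$ and $-y$. Comparing the two expressions, the ``reconciliation'' you defer to bookkeeping is exactly the residual identity
\[
xJ(G\backslash u)=-(1-x-y)\bigl(J(G-v)-yJ(G-u-v)\bigr)+(1-y)\bigl(J(G\backslash v)-yJ((G-u)\backslash v)\bigr)+xyJ((G-v)\backslash u),
\]
which is a substantive statement about domination-covered pairs in its own right, not an elementary conversion between contraction forms: you give no argument for it, and the obvious symmetric move (expanding at $v$ in $G$ and in $G-u$ and subtracting) fails because $J(G\mid N_G[v]\cap W=\emptyset)\neq J(G-u\mid N_{G-u}[v]\cap W=\emptyset)$ in general, since $u$ may be dominated by vertices outside $N_G[v]$. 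The paper avoids this entirely by a different decomposition: it splits $J(G\mid v\in W)$ according to whether $u\in W$, uses $N_G[v]\subseteq N_G[u]$ to write $J(G\mid u\in W,v\in W)=xJ(G-v\mid u\in W)$ and $J(G\mid u\not\in W,v\in W)=yJ(G-u\mid v\in W)$, and then applies Equation~(\ref{e:ainw}) to each of the three conditional terms, which is what produces the $v$-contractions and the stated coefficients. Either complete your route by actually proving the displayed residual identity, or adopt the paper's decomposition; as written, you have proved a true but different formula.
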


\begin{proof}
As in Lemma \ref{l:naempty} we start by splitting the bivariate polynomial conditionally;
\begin{equation}\label{e:Yuv}
J(G | v  \in W) = J(G | u \in W, v \in W) + J(G | u \not \in W, v \in W)
\end{equation}
Since $N[v] \subseteq N[u]$ we necessarily have $J(G | u \not \in W, v  \in W) = y J(G-u | v  \in W )$
and $J(G | u \in W, v \in W) = x J(G-v | u \in W )$. 
Substituting for these three terms in Equation (\ref{e:Yuv}) we get:
\[
 J(G | v  \in W) = x J(G-v | u \in W ) + y J(G-u | v  \in W ).
\]
We can now use Equation (\ref{e:ainw}) for each of these cases, getting:
\begin{eqnarray*}
&& \frac{x\left( (1-y) J(G\backslash v) +yJ(G-v)+(1-y)y^{|N_G(v)|} J(G-N_G[v])  -J(G)\right)} {1-x-y}\\
 &=& \frac{x^2\left( (1-y) J((G-v)\backslash u) +yJ(G-v-u)\right)} {1-x-y}\\
 &+& \frac{x^2\left( (1-y)y^{|N_{G-v}(u)|} J(G-v-N_{G-v}[u])  -J(G-v)\right)} {1-x-y}\\
 &+& \frac{xy\left( (1-y) J((G-u)\backslash v) +yJ(G-u-v)\right)} {1-x-y}\\
&+& \frac{xy\left( (1-y)y^{|N_{G-u}(v)|} J(G-u-N_{G-u}[v])  -J(G-u)\right)} {1-x-y}
\end{eqnarray*}

Cancelling the common factor
of $\frac{x}{1-x-y}$, assuming that $x\not=0$ and $x\not=1-y$, we get:
\begin{eqnarray*}
0 &=& - (1-y)J(G\backslash v) - yJ(G-v) - (1-y)y^{|N(v)|} J(G-N_G[v])  + J(G) \\
&&+  x(1-y)J((G-v)\backslash u) +xyJ(G-v-u) \\ 
&&+x(1-y)y^{|N_{G-v}(u)|} J(G-v-N_{G-v}[u])  -xJ(G-v) \\
&&+  y(1-y)J((G-u)\backslash v) +y^2 J(G-u-v)\\
&&+(1-y)y^{|N_{G-u}(v)|+1} J(G-u-N_{G-u}[v])  -yJ(G-u) 
\end{eqnarray*}
Making $J(G)$ the subject of this equation, collecting like terms, and noting that $G-u-N_{G-u}[v] = G-N_G[v]$ and 
$|N_{G-u}(v)|+1 =|N_G(v)|$, the two terms involving these expressions cancel. Similarly $|N_{G-v}(u)|=|N_G(u)| -1$  and $G-v-N_{G-v}[u] = G-N_G[u]$ so we get the desired result.
\end{proof}

Major simplifications of this result can exist in certain circumstances which reduce the number of smaller graphs produced by the recurrence from seven to much fewer:

\begin{cor}\label{c:val1}
If $G$ contains a vertex $v$ of valency 1 with neighbour $u$ then
\begin{align*}
J(G) &= (1+x) J(G - v) + x(y-1)\left( J((G-v)\backslash u) + y^{(|N_G(u)|-1)} J(G-N_G[u])  \right)
\end{align*}
\end{cor}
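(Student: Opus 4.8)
The plan is to specialise Theorem~\ref{t:domcov} to a vertex $v$ of valency $1$. Since $v$ has the single neighbour $u$, we have $N_G[v]=\{v,u\}\subseteq N_G[u]$, so $v$ is domination covered by $u$ and the recurrence of Theorem~\ref{t:domcov} applies verbatim. The whole argument is then to simplify each of the seven graphs appearing there using the valency-$1$ hypothesis and to show that the two terms which do not appear in the claimed formula cancel.

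First I would record the graph simplifications forced by $N_G(v)=\{u\}$. Because $v$ has only one neighbour, the vertex contraction adds no edges, so $G\backslash v=G-v$. In $G-u$ the vertex $v$ becomes isolated, so contracting it merely deletes it, giving $(G-u)\backslash v=G-u-v$. The remaining graphs $(G-v)\backslash u$, $G-u$, $G-u-v$ and $G-N_G[u]$ are left untouched, and in particular the exponent $|N_G(u)|-1$ is unaffected. Substituting $J(G\backslash v)=J(G-v)$ and $J((G-u)\backslash v)=J(G-u-v)$ into Theorem~\ref{t:domcov} produces an expression in the five polynomials $J(G-v)$, $J(G-u-v)$, $J(G-u)$, $J((G-v)\backslash u)$ and $J(G-N_G[u])$.

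Next I would collect like terms. The coefficient of $J(G-v)$ becomes $(x+y)+(1-y)=1+x$, while the terms $-x(1-y)J((G-v)\backslash u)$ and $-x(1-y)y^{|N_G(u)|-1}J(G-N_G[u])$ combine, after writing $-x(1-y)=x(y-1)$, into exactly the bracketed expression of the statement. What remains is $yJ(G-u)-y(1+x)J(G-u-v)$, the first summand being the original $yJ(G-u)$ term and the second coming from the merged coefficient $-y(x+y)-y(1-y)=-y(1+x)$ of $J(G-u-v)$.

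The crux is to see that this leftover vanishes. Since $v$ is isolated in $G-u$, equation~(\ref{e:disco}) applies with the component $K_1=\{v\}$; as the only vertex subsets of $K_1$ are $\emptyset$ and $\{v\}$, the latter contributing $x$ with empty external neighbourhood, we have $J(K_1)=1+x$ and hence $J(G-u)=(1+x)J(G-u-v)$. Substituting this identity makes $yJ(G-u)-y(1+x)J(G-u-v)=0$, and the three surviving terms are precisely the claimed formula. I expect this cancellation --- spotting the isolated vertex and invoking the multiplicativity~(\ref{e:disco}) --- to be the only non-mechanical step, with the rest being routine bookkeeping of coefficients.
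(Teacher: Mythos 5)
Your proof is correct and follows essentially the same route as the paper: both substitute $G\backslash v=G-v$ and $(G-u)\backslash v=G-u-v$ into Theorem~\ref{t:domcov}, collect the coefficient $1+x$ on $J(G-v)$, and kill the $J(G-u-v)$ terms via the multiplicativity identity $J(G-u)=(1+x)J(G-u-v)$ from Equation~(\ref{e:disco}). The only cosmetic difference is that the paper substitutes that identity before collecting coefficients rather than after.
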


\begin{proof}
In such a graph $G$, by Equation (\ref{e:disco}), 
we know that $J(G-u)= J(K_1 \cup G-u-v) = (1+x)J(G-u-v)$, $G\backslash v = G-v$ and
$(G-u)\backslash v = G-u-v$. Thus, substituting these into Theorem \ref{t:domcov} we get:
\begin{eqnarray*}
J(G) &=& (x+y)J(G-v) -y(x +y)J(G-u-v) +yJ(G-u)\\
&&+(1-y)\left( J(G - v) -yJ((G-u-v) -xJ((G-v)\backslash u)\right) \\
&&-x(1-y)y^{|N_G(u)|-1}J(G-N_G[u]).
\end{eqnarray*}
The coefficient of $J(G-v)$ is $(x+y)+(1-y)=1+x$ as required and the coefficient of $J(G-u-v)$ is 
$-y(x+y)+y(1+x)-(1-y)y = 0$, leaving us with exactly the equation stated.
\end{proof}

As a result of the vertex contraction operation, it is common to create cliques within the resulting graph and these graphs can then have their polynomials expressed in terms of those of just three graphs again.

\begin{cor}\label{c:nbeq}
If $G$ contains two vertices $u$ and $v$ such that $N_{G-v}(u) =N_{G-u}(v)=L$ and all vertices of $L$ are adjacent then
\begin{align*}
J(G) &= (1+x+y ) J(G - v) - (x+y) J((G-v- u) + x(1-y)y^{(|N_G(u)|-1)} J(G-N_G[u])  
\end{align*}
\end{cor}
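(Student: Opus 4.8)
The plan is to recognize the stated result as the specialization of Theorem \ref{t:domcov} that occurs when the common neighbourhood $L$ of $u$ and $v$ is a clique, in direct parallel with the pendant-vertex specialization of Corollary \ref{c:val1}. First I would record the structural consequences of the hypothesis. Taking $u$ and $v$ adjacent, the conditions $N_{G-v}(u)=N_{G-u}(v)=L$ force $N_G[u]=N_G[v]=\{u,v\}\cup L$, so that $v$ is domination covered by $u$ and Theorem \ref{t:domcov} is applicable. Moreover $u$ and $v$ are then true twins, and since $L$ is a clique and each of $u,v$ is adjacent to every vertex of $L$, the whole set $\{u,v\}\cup L$ is a clique. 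In particular $|N_G(u)|=|N_G(v)|=|L|+1$ and $G-N_G[u]=G-N_G[v]$, which is what makes the exponent and the graph in the last term well defined regardless of which twin we name.

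Next I would collapse the contraction and deletion terms that appear in Theorem \ref{t:domcov}. Because $N_G(v)=L\cup\{u\}$ is already a clique, the contraction $G\backslash v$ introduces no new edges and reduces to $G-v$; similarly, inside $G-v$ the neighbours of $u$ form the clique $L$, so $(G-v)\backslash u=G-u-v$, and by the symmetric argument $(G-u)\backslash v=G-u-v$. The one genuinely non-trivial identity is that the transposition swapping $u$ and $v$ is an automorphism of $G$ (they are twins with identical closed neighbourhoods), whence $G-u\cong G-v$ and therefore $J(G-u)=J(G-v)$. It is precisely this last equality that lets the seven graphs produced by Theorem \ref{t:domcov} collapse to the three graphs $G-v$, $G-u-v$ and $G-N_G[u]$ promised in the statement.

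Finally I would substitute these identities into Theorem \ref{t:domcov} and collect like terms. The coefficient of $J(G-v)$ becomes $(x+y)+y+(1-y)=1+x+y$, the coefficient of $J(G-u-v)$ becomes $-y(x+y)+(1-y)(-x-y)=-(x+y)$, and the term in $J(G-N_G[u])$ is carried over unchanged from Theorem \ref{t:domcov}, which together give the displayed formula. I expect the only real obstacle to lie in the graph-surgery bookkeeping of the second paragraph — in particular, verifying that the clique hypothesis on $L$ turns each vertex contraction into an ordinary deletion, and that the twin symmetry genuinely identifies $G-u$ with $G-v$. Once those are in hand, the collection of coefficients is routine algebra, mirroring the computation carried out in Corollary \ref{c:val1}.
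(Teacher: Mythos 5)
Your proof follows essentially the same route as the paper's: substitute the identities $J(G-u)=J(G-v)=J(G\backslash v)$ and $(G-u)\backslash v=(G-v)\backslash u=G-u-v$ into Theorem \ref{t:domcov} and collect coefficients, with your extra justification of the twin symmetry and of the clique hypothesis turning contractions into deletions being detail the paper simply asserts. One small point: carrying the last term of Theorem \ref{t:domcov} over unchanged gives $-x(1-y)y^{|N_G(u)|-1}J(G-N_G[u])$, i.e.\ $+x(y-1)y^{|N_G(u)|-1}J(G-N_G[u])$ as in Corollary \ref{c:val1}, so the sign $+x(1-y)$ in the stated formula appears to be a typo in the statement rather than something your computation (or the paper's) actually produces.
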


\begin{proof}
If $G$ has these vertices then $J(G-u)= J(G-v)=J(G\backslash v)$ and $G-u-v = (G-u)\backslash v = (G-v)\backslash u$.
As before we use these in Theorem \ref{t:domcov} and find:
\begin{eqnarray*}
J(G) &=& (x+y)J(G-v) -y(x +y)J(G-u-v) +yJ(G-v)\\
&&+(1-y)\left( J(G - v) -yJ((G-u-v) -xJ((G-v- u)\right) \\
&&-x(1-y)y^{|N_G(u)|-1}J(G-N_G[u]).
\end{eqnarray*}
The coefficient of $J(G-v)$ is $(x+y)+y+(1-y)=1+x+y$ as required and the coefficient of $J(G-u-v)$ is 
$-y(x+y)-(1-y)y-(1-y)x = -y(x+y)-(1-y)(x+y)=(-y-1+y)(x+y)=-(x+y)$, as required.
\end{proof}

\section{Reductions for Special Graphs and Cut Vertices and Edges}

Both Corollaries \ref{c:val1} and \ref{c:nbeq} are able to be used to quickly calculate the bivariate polynomials of many small graphs, but firstly a few base cases and other straightforward results should be established:

For the complete graph $K_n$ every non-empty set dominates all other vertices and so we have, for $n\geq1$:
\[
J(K_n) = (x+y)^n -y^n + 1
\]

The end vertices of paths satisfy Corollary \ref{c:val1} and so we have, for $n\geq4$:
\[
J(P_n) = (1+x) J(P_{n-1}) + x(y-1)\left( J(P_{n-2}) + y J(P_{n-3})  \right)
\]

More generally, we can deduce simple conditional results for pendant vertices as well; let us suppose that $v$ is a vertex of valency 1 and $u$ is its neighbour. These are similar to Corollaries \ref{c:ainw} and \ref{c:anotinw}, but have simpler conditions. 
\begin{eqnarray}
J(G | v\in W)  &=&  x J(G-v | u \in W)  +  xyJ(G-v-u)  \label{e:pcond} \\ 
\nonumber
J(G | v\not\in W)  &=&  (y-1) J(G-v | u \in W)  +  J(G-v)  
\end{eqnarray}
These come from considering the cases of when $u$ is and is not in $W$ separately and simplifying to subgraphs with appropriate factors of $x$ or $y$ depending on whether or not $u$ or $v$ is in $W$ or if they are dominated.

It is possible to use the results from Section \ref{s:recur} to get relations in other special cases too, such as cut vertices or edges. The graphs that result from these are significantly smaller than the original graph and so are much more quickly calculated.

\begin{lem}\label{l:cute}
If $e=uv$ is a cut-edge of a graph $G$ then 
\begin{eqnarray*}
J(G) &=& J(G-e) + (y-1)J(G-e | u\in W, N_{G-e}[v]\cap W=\emptyset) \\
      & &+ (y-1)J(G-e | N_{G-e}[u]\cap W=\emptyset, v\in W)
\end{eqnarray*}
\end{lem}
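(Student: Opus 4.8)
The plan is to compare the monomials contributing to $J(G)$ and to $J(G-e)$ term by term over all vertex subsets $W \subseteq V(G)$, exploiting that the only adjacency differing between $G$ and $G-e$ is the pair $\{u,v\}$. For every $W$ the factor $x^{|W|}$ is identical in both graphs, so the only thing that can change is the exponent of $y$, namely the size of the external neighbourhood. First I would observe that for any vertex $w \notin \{u,v\}$ one has $N_G(w)=N_{G-e}(w)$, whence $w\in N_G(W)$ if and only if $w\in N_{G-e}(W)$. Consequently $N_{G-e}(W)\subseteq N_G(W)$, and the two external neighbourhoods can differ only in whether they contain $u$ or $v$.

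Next I would pin down exactly when the edge $e$ forces $u$ (respectively $v$) into the external neighbourhood. Since $N_G(u)=N_{G-e}(u)\cup\{v\}$, the vertex $u$ lies in $N_G(W)$ but not in $N_{G-e}(W)$ precisely when $u\notin W$, the set $W$ meets $N_G(u)$ only through $v$, and $v\in W$; that is, exactly when $N_{G-e}[u]\cap W=\emptyset$ and $v\in W$. Symmetrically, $v$ lies in $N_G(W)\setminus N_{G-e}(W)$ exactly when $N_{G-e}[v]\cap W=\emptyset$ and $u\in W$. The first condition requires $v\in W$ while the second requires $v\notin W$, so they are mutually exclusive; hence $|N_G(W)|$ exceeds $|N_{G-e}(W)|$ by exactly $1$ when one of these two conditions holds, and the two sizes coincide otherwise. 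This step uses only the single edge $e$ and not that it is a cut-edge; the cut-edge hypothesis becomes relevant only afterwards, when one factors the conditional polynomials over the two components of $G-e$ to obtain genuinely smaller graphs.

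With this dichotomy established, I would split the defining sum $J(G)=\sum_W x^{|W|}y^{|N_G(W)|}$ into those $W$ satisfying one of the two conditions above and all remaining $W$. On the remaining sets the $y$-exponent agrees with that in $J(G-e)$, whereas on each of the two special families the exponent is one larger, contributing an extra factor of $y$ relative to $J(G-e)$. Writing $J(G-e)=\sum_W x^{|W|}y^{|N_{G-e}(W)|}$ and subtracting, every $W$ cancels except the two special families, each left with coefficient $y-1$. Recognising the surviving sums over those families as the conditional polynomials $J(G-e\mid u\in W,\,N_{G-e}[v]\cap W=\emptyset)$ and $J(G-e\mid N_{G-e}[u]\cap W=\emptyset,\,v\in W)$ then yields the stated identity.

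The main obstacle I anticipate is the second step: getting the membership conditions for $u$ and $v$ exactly right and verifying that they never hold simultaneously. The subtlety is that $u$ can be pushed into the external neighbourhood by $e$ only if $u$ has no \emph{other} neighbour already in $W$, for otherwise $u\in N_{G-e}(W)$ anyway and the edge changes nothing; this is precisely what the closed-neighbourhood condition $N_{G-e}[u]\cap W=\emptyset$ encodes. Once this bookkeeping is correct, the remainder is a routine re-indexing of the sum.
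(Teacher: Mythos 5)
Your proposal is correct and follows essentially the same route as the paper: both compare the contributions of each subset $W$ to $J(G)$ and $J(G-e)$, observe that they differ (by a factor of $y$ versus $1$) exactly when one endpoint of $e$ is in $W$ and is the sole dominator of the other endpoint, and collect the difference as $(y-1)$ times the two conditional polynomials. Your write-up is more detailed than the paper's one-sentence argument, and your remark that the identity holds for any edge (the cut-edge hypothesis only being needed later to factor $J(G-e)$ over components) is accurate but does not change the substance.
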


\begin{proof}
The only difference between the contribution to $J(G)$ or $J(G-e)$ from a set $W$ occurs when exactly one vertex of $e$ is in $W$, and, moreover, it is the only vertex dominating the other vertex of $e$. In this case there is a contribution of $y$ from these vertices for $G$ but a contribution of 1 for $G-e$.
\end{proof}

\begin{lem}\label{l:cutv}
If $v$ is a cut-vertex of a graph $G$ and we split $G$ into different components $C_1, \ldots C_l$, each with their own copy of vertex $V$ then
\[
J(G) = \frac{1}{x^{l-1}} \prod_{j=1}^l J(C_j | v \in W) + y \prod_{j=1}^l J(C_j -v) + (1-y) \prod_{j=1}^l J(C_j | N_{C_j}[v]\cap W = \emptyset)
\]
\end{lem}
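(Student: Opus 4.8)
The plan is to expand the defining sum $J(G) = \sum_{W \subseteq V(G)} x^{|W|} y^{|N_G(W)|}$ by partitioning the subsets $W$ according to whether or not the cut-vertex $v$ lies in $W$, and then to factor each part over the blocks $C_1, \ldots, C_l$. For any $W \subseteq V(G)$ I write $W_j := W \cap V(C_j)$; because $v$ is a cut-vertex the sets $V(C_j)$ pairwise meet only in $\{v\}$, so $W$ is recovered from the tuple $(W_1, \ldots, W_l)$, and every vertex $u \neq v$ has all of its neighbours inside the single block $C_j$ containing it (an edge between two non-$v$ vertices cannot cross components of $G-v$). This last fact is the structural engine of the argument: for $u \neq v$ we have $u \in N_G(W)$ if and only if $u \in N_{C_j}(W_j)$, so the part of $N_G(W)$ consisting of non-cut vertices is a disjoint union over the blocks, and its size is $\sum_j |N_{C_j}(W_j) \setminus \{v\}|$.

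First I would treat the case $v \in W$, which should produce the term $\frac{1}{x^{l-1}} \prod_j J(C_j | v \in W)$. Here $v \in W_j$ for every $j$, so in $\sum_j |W_j|$ the vertex $v$ is counted $l$ times rather than once; hence $|W| = \sum_j |W_j| - (l-1)$, which is exactly the source of the prefactor $x^{-(l-1)}$. Since $v \in W$ it contributes nothing to $N_G(W)$, and moreover $v \notin N_{C_j}(W_j)$ for each $j$, so the disjoint-union observation gives $|N_G(W)| = \sum_j |N_{C_j}(W_j)|$. Collecting the factors and summing over all tuples with $v \in W_j$ then yields the first term once each block sum is recognised as $J(C_j | v \in W)$.

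The case $v \notin W$ is where the real work lies, and I expect it to be the main obstacle. For such $W$ no overcounting occurs, so $|W| = \sum_j |W_j|$, and the non-cut part of $N_G(W)$ again contributes a clean product over the blocks; the only delicate point is that $v$ itself belongs to $N_G(W)$ precisely when $v$ is dominated in $G$, and this is a \emph{global} event — it occurs as soon as $v$ is dominated in even one block — so it does not factor over the $C_j$ directly. To handle it I would work relative to the reduced neighbourhood sizes $|N_{C_j}(W_j) \setminus \{v\}|$, which are exactly the exponents of $y$ appearing in $J(C_j - v)$, and record separately whether each block dominates $v$. The tuples in which no block dominates $v$ are exactly those counted (with these same reduced weights) by $\prod_j J(C_j | N_{C_j}[v] \cap W = \emptyset)$, since there $v \notin N_{C_j}(W_j)$; on every remaining tuple the global domination of $v$ contributes a single extra factor of $y$ to $y^{|N_G(W)|}$.

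Finally I would assemble the two sub-cases by complementation. Summing the reduced weights over all tuples with $v \notin W$ gives $\prod_j J(C_j - v)$, within which the ``$v$ dominated by no block'' part equals $\prod_j J(C_j | N_{C_j}[v] \cap W = \emptyset)$. Multiplying the whole product by $y$ correctly inserts the extra dominated-$v$ neighbour on the tuples where $v$ is dominated, but wrongly inflates the undominated tuples by a factor $y$ in place of $1$; adding back $(1-y)\prod_j J(C_j | N_{C_j}[v] \cap W = \emptyset)$ repairs exactly this discrepancy, producing $y \prod_j J(C_j - v) + (1-y)\prod_j J(C_j | N_{C_j}[v] \cap W = \emptyset)$. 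Added to the first term, this is the claimed identity. The one computation to carry out with care is this bookkeeping, namely verifying that after the factor $y$ is applied to the full product the undominated tuples end up weighted by $(1-y)$ rather than by $1$ or $0$.
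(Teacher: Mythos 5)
Your proposal is correct and follows essentially the same route as the paper's own (much terser) proof: split on whether $v\in W$, account for the $l$-fold counting of $v$ via the $x^{-(l-1)}$ factor, and in the $v\notin W$ case correct the product $y\prod_j J(C_j-v)$ by $(1-y)\prod_j J(C_j\mid N_{C_j}[v]\cap W=\emptyset)$ for the subsets where $v$ is undominated. Your version simply supplies the bookkeeping details the paper leaves implicit.
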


\begin{proof}
Given $v$ as a cut vertex, it is either in $W$ or not. If it is then we split the graph, duplicating $v$, and know it will contribute $x$ to each component, but only $x$ to $G$, so we divide by $x^{l-1}$. For most other choices of $W$, vertex $v$ will be dominated by a neighbour, and so contribute $y$, unless none of the neighbours of $v$ are in $W$, in which case the contribution will be 1 instead.
\end{proof}

Note that this result also implies that $J(G | v\in W) = \frac{1}{x^{l-1}} \prod_{j=1}^l J(C_j | v \in W)$ and 
$J(G | v\not\in W) =  y \prod_{j=1}^l J(C_j -v) + (1-y) \prod_{j=1}^l J(C_j | N_{C_j}[v]\cap W = \emptyset)$.
In both lemmas \ref{l:cute} and \ref{l:cutv} we get terms which can be further expanded using expressions from Section \ref{s:recur}, namely Lemma \ref{l:naempty} and the observation following it. However, can be more useful to utilise these results in their basic forms, as will be shown in Section \ref{s:cotrees}.

\section{Two families of graphs which share a polynomial}\label{s:cotrees}

Using the results in Corollaries \ref{c:val1}  and \ref{c:nbeq} and Lemmas \ref{l:cute} and \ref{l:cutv} it becomes practical to compute the polynomial for larger graphs, especially trees since there are many cut vertices and vertices of valency 1 at each stage of the recursive calculation. This led to the discovery of the following pair of trees which are not isomorphic (since the vertices of valency at least 3 form a different induced subgraph in the two graphs) but they have the same valency sequence and also the polynomial.

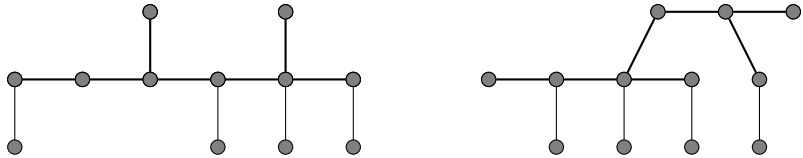
\begin{figure}[h]
\begin{center}
\begin{tikzpicture}[scale=0.9,transform shape]
   \foreach \pos/\name in {{(1,1)/a}, {(2,1)/b}, {(3,1)/c},{(4,1)/d}, {(5,1)/e}, {(6,1)/f}, {(3,2)/g}, {(5,2)/h}}
        \node[enode] (\name) at \pos {};
    \foreach \source/\dest in {b/a, c/b,d/c,d/e,e/f,g/c,h/e}
        \path[edge] (\source) -- (\dest);
 \foreach \x in {1,4,5,6}
{
\draw{     (\x,0) node[enode] {}  -- (\x,1) };
}

   \foreach \pos/\name in {{(8,1)/a}, {(9,1)/b}, {(10,1)/c},{(10.5,2)/d}, {(11,1)/e}, {(11.5,2)/f}, {(12,1)/g}, {(12.5,2)/h}}
        \node[enode] (\name) at \pos {};
    \foreach \source/\dest in {b/a, c/b,d/c,c/e,d/f,g/f,h/f}
        \path[edge] (\source) -- (\dest);
{
\foreach \x in {9,10,11,12}
{
\draw{     (\x,0) node[enode] {}  -- (\x,1) };
}
   \foreach \pos/\name in {{(1,1)/a}, {(2,1)/b}, {(3,1)/c},{(4,1)/d}, {(5,1)/e}, {(6,1)/f}, {(3,2)/g}, {(5,2)/h}}
        \node[enode] (\name) at \pos {};
   \foreach \pos/\name in {{(8,1)/a}, {(9,1)/b}, {(10,1)/c},{(10.5,2)/d}, {(11,1)/e}, {(11.5,2)/f}, {(12,1)/g}, {(12.5,2)/h}}
        \node[enode] (\name) at \pos {};
}
   \foreach \pos/\name in {{(1,1)/a}, {(2,1)/b}, {(3,1)/c},{(4,1)/d}, {(5,1)/e}, {(6,1)/f}, {(3,2)/g}, {(5,2)/h}}
        \node[enode] (\name) at \pos {};
\end{tikzpicture}
\end{center}
\caption{Two trees with the same bivariate domination polynomial}\label{f:twotree}
\end{figure}

In fact, it is the case that if we take four copies of any graph $G$ and choose any subset $S$ of vertices of $G$ and join the  vertices from $S$ to each of the four vertices of the tree that are attached via downward edges in Figure \ref{f:twotree} then the two graphs shown in Figure \ref{f:geng} that result are again non-isomorphic, but they can be shown to have the same polynomial. The key observation to begin with is that the deletion of the two marked edges from either $L_1$ or $L_2$ in Figure \ref{f:geng} leaves the same subgraph.

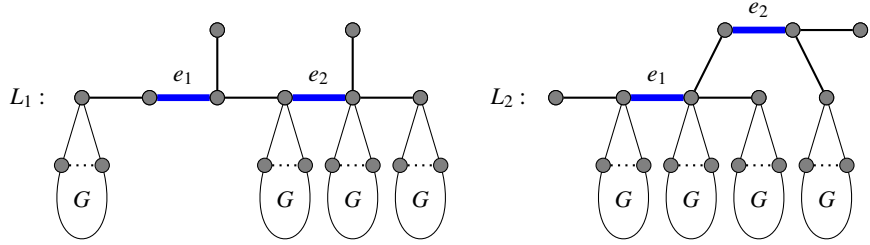
\begin{figure}[h]
\begin{center}
\begin{tikzpicture}[scale=0.9, transform shape]
\node[math] () at (0.2,5) {$L_1:$};
   \foreach \pos/\name in {{(6,5)/a}, {(5,5)/b}, {(4,5)/c},{(3,5)/d}, {(2,5)/e}, {(1,5)/f}, {(5,6)/g}, {(3,6)/h}}
        \node[enode] (\name) at \pos {};
    \foreach \source/\dest in {b/a, c/b,d/c,d/e,e/f,g/b,h/d}
        \path[edge] (\source) -- (\dest);
    \foreach \source/\dest in {c/b,d/e}
        \path[tedge] (\source) -- (\dest);
\node[math] () at (2.5,5.3) {$e_1$};
\node[math] () at (4.5,5.3) {$e_2$};
 \foreach \x in {1,4,5,6}
{
\path[edge]     (\x-.3,4)  {}  -- (\x+.3,4) [dotted];
\draw (\x-.3,4)  to[out=-105,in=-75,distance=1.5cm] (\x+.3,4);
\node[math] () at (\x,3.5) {$G$};
\draw{     
(\x-.3,4) node[enode] {}  -- (\x,5)
(\x+.3,4) node[enode] {}  -- (\x,5)
 };
}
\node[math] () at (7.3,5) {$L_2:$};
  \foreach \pos/\name in {{(9,5)/a}, {(10,5)/b}, {(11,5)/c},{(10.5,6)/d}, {(11.5,6)/e}, {(12,5)/f}, {(12.5,6)/g}, {(8,5)/h}}
        \node[enode] (\name) at \pos {};
    \foreach \source/\dest in {b/a, c/b,d/b,d/e,e/f,g/e,h/a}
        \path[edge] (\source) -- (\dest);
    \foreach \source/\dest in {a/b,d/e}
        \path[tedge] (\source) -- (\dest);
\node[math] () at (9.5,5.3) {$e_1$};
\node[math] () at (11,6.3) {$e_2$};
 \foreach \x in {9,10,11,12}
{
\draw (\x-.3,4)  to[out=-105,in=-75,distance=1.5cm] (\x+.3,4);
\node[math] () at (\x,3.5) {$G$};
\path[edge]     (\x-.3,4)  {}  -- (\x+.3,4) [dotted];
\draw{     
(\x-.3,4) node[enode] {}  -- (\x,5)
(\x+.3,4) node[enode] {}  -- (\x,5)
 };
}
 \foreach \pos/\name in {{(1,5)/a}, {(2,5)/b}, {(3,5)/c},{(4,5)/d}, {(5,5)/e}, {(6,5)/f}, {(3,6)/g}, {(5,6)/h}}
      \node[enode] (\name) at \pos {};
  \foreach \pos/\name in {{(9,5)/a}, {(10,5)/b}, {(11,5)/c},{(10.5,6)/d}, {(11.5,6)/e}, {(12,5)/f}, {(12.5,6)/g}, {(8,5)/h}}
        \node[enode] (\name) at \pos {};
\end{tikzpicture}
\end{center}
\caption{The general pair of graphs with the same polynomial}\label{f:geng}
\end{figure}

If we apply Lemma  \ref{l:cute} to each graph using $e_1$ and $e_2$ then we will produce nine graphs which are mostly made of components which are made of paths and copies of the graph $G$ with some restrictions on which vertices can be in $W$.
These are summarised in Table \ref{tab:twotrees}, using the key in Figure \ref{f:twotrees}.


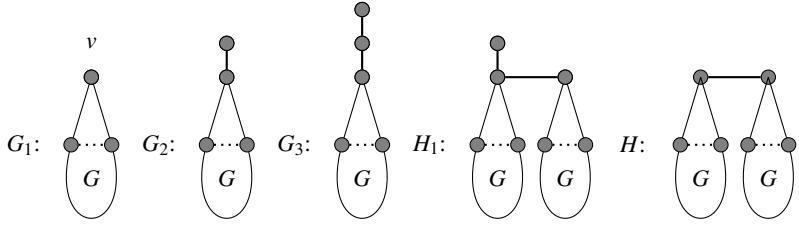
\begin{figure}[h]
\begin{center}
\begin{tikzpicture}[scale=0.9, transform shape]
\begin{scope}[shift={(0,-4)}]
   \foreach \pos/\name in {{(1,5)/a}, {(3,5)/b}, {(3,5.5)/c},{(5,5)/d}, {(5,6)/e}, {(5,5.5)/f}, {(7,5)/g}, {(7,5.5)/h}, {(8,5)/i}, {(10,5)/j}, {(11,5)/k}}
        \node[enode] (\name) at \pos {};
    \foreach \source/\dest in {b/c, d/f,e/f,g/h,g/i,j/k}
        \path[edge] (\source) -- (\dest);
\node[math] () at (1,5.5) {$v$};
\node[math] () at (0,4) {$G_1$:};
\node[math] () at (2,4) {$G_2$:};
\node[math] () at (4,4) {$G_3$:};
\node[math] () at (6,4) {$H_1$:};
\node[math] () at (9,4) {$H$:};
 \foreach \x in {1,3,5,7,8,10,11}
{
\path[edge]     (\x-.3,4)  {}  -- (\x+.3,4) [dotted];
\draw (\x-.3,4)  to[out=-105,in=-75,distance=1.5cm] (\x+.3,4);
\node[math] () at (\x,3.5) {$G$};
\draw{     
(\x-.3,4) node[enode] {}  -- (\x,5)
(\x+.3,4) node[enode] {}  -- (\x,5)
 };
}
   \foreach \pos/\name in {{(1,5)/a}, {(3,5)/b}, {(3,5.5)/c},{(5,5)/d}, {(5,6)/e}, {(5,5.5)/f}, {(7,5)/g}, {(7,5.5)/h}, {(8,5)/i}}
        \node[enode] (\name) at \pos {};
\end{scope}
\end{tikzpicture}
\end{center}
\caption{Subgraphs resulting from $L_1$ and $L_2$ using Lemma \ref{l:cute}} \label{f:twotrees}
\end{figure}

Some of the subgraphs which require the vertex furthest from $G$ ($v$ or the vertex of valency 1) to either be in $W$ or outside of it will be denoted with a $+$ or $-$ superscript, respectively, so $J(G_1^+):=J(G_1 | v\in W)$ and $J(G_1^-):=J(G_1 | v\not \in W)$, etc. Finally, the polynomial whose graph is $G$ but none of the vertices from $S$ are in $W$ will be denoted $J(G'):=J(G | S\cap W= \emptyset)$. 

In Table \ref{tab:twotrees} ``L" refers to the graph in which the left vertex of their edge in Figure \ref{f:geng} is in $W$ and the right vertex and its neighbourhood are forced to be out of $W$, and the opposite for ``R", whereas ``D" means to simply delete the edge under consideration.

\begin{table}[h]
\begin{center}
\begin{tabular}{|c|c|c|}\hline
$e_1$,$e_2$&$J(L_1)$ & $J(L_2)$ \\ \hline
D,D&$J(G_2)  J(G_3)  J(H_1)$ & $J(G_2)  J(H_1)  J(G_3)$ \\
D,L& $(x^2+2xy+y) J(G_2)  J(G_1^+) J(G') J(G_1^-) $ & $J(G_2)  J(H_1^+)  J(G_1^-)$ \\
D,R&$\frac{(xy+1)(x+y)}{x} J(G_2)  J(G')  J(G_1^+) J(G_1^-)$ & $(x+y) J(G_2)  J(H^-)  J(G_2^+)$ \\
L,D&$J(G_2^+)  J(G_1^-) J(H_1)$ & $(x+y) J(G_1^+) J(G')  J(G_1^-) J(G_3)$ \\
L,L&0 &0 \\
L,R& $\frac{(x+y)}{x} J(G_2^+)  J(G')  J(G_1^+) J(G_2^+)$ & $(x+y)^2 J(G_2^+)  J(G') J(G_1^-)  J(G_2^+)$ \\
R,D&$ (x+y) J(G_1^-)  J(G_2^+) J(H_1)$ & $ \frac{(x+y)}{x} J(G')  J(G_1^+)  J(G_2^+) J(G_3)$ \\
R,L&$ x(x+y) J(G_1^-)  J(G_1^+) J(G') J(G_1^-)$ & $  J(G')  J(G_1^+)  J(G_2^+) J(G_1^-)$ \\
R,R&0 &0 \\\hline
\end{tabular}
\end{center}
\caption{Initial Breakdown of the two graphs using Lemma \ref{l:cute} on $e_1$ and $e_2$} \label{tab:twotrees}
\end{table}

Note that the polynomial resulting from two ``L"s or two ``R"s is 0 since in those cases a vertex in the graph is forced to be both in and out of $W$ simultaneously. 
The polynomials in $x$ and $y$ result from the short paths left over after using Lemma \ref{l:cutv}.
From the expansions using Lemma \ref{l:cute} the terms with one ``D" are multiplied by $(y-1)$ and those with none by $(y-1)^2$.

We can then further simplify these polynomials, firstly using Lemma \ref{l:cute} again on the graphs involving $H$ and $H_1$ to produce the following relations:
\begin{eqnarray}\nonumber
J(H_1) &=& J(G_2) J(G_1) +(y-1)(x+y+1)J(G') J(G_1^+)  \\ \label{e:H}
J(H_1^+) &=& J(G_2^+) J(G_1) +x(y-1)J(G') J(G_1^+) \\\nonumber
J(H^-) &=& J(G_1^-) J(G_1) + (y-1)J(G') J(G_1^+) 
\end{eqnarray}

We want to show that the column sums of the polynomials in Table \ref{tab:twotrees} are the same polynomial. 
The first line of the table is the same, just re-arranged, so that can be removed, and
substituting the expressions in Equation (\ref{e:H}) for $H$ and $H_1$ into the relevant entries in Table \ref{tab:twotrees} gives exactly four terms without a $J(G')$ term and they sum to $(1+x+y) J(G_1) J(G_2) J(G_1^-) J(G_2^+)$ for each column, so we can ignore each of those parts from now on.

At this stage we have the following as the expression $J(L_1)-J(L_2)$ after dividing by $(x+y)(y-1)J(G_1^+)J(G')$ which appears as a coefficient of each remaining expression after collecting similar terms and we can factor it as shown in Equation (\ref{e:exp}):
\begin{eqnarray} 
&&(x+1)J(G_2)J(G_1^-)
+\frac{(x+1)}{x} J(G_2)J(G_2^+)
+\frac{(y-1)}{x} J(G_2^+)J(G_2^+)\label{e:exp} \\\nonumber
&+&x(y-1)J(G_1^-)J(G_1^-)
+2(y-1)J(G_2^+)J(G_1^-)
-J(G_3)J(G_1^-)
-\frac{1}{x} J(G_3)J(G_2^+)   \\ \nonumber
&=& \left( \frac{xJ(G_1^-)+J(G_2^+)}{x}\right) \left(  (x+1)J(G_2) + (y-1)J(G_2^+) + x(y-1) J(G_1^-) - J(G_3)\right)
\end{eqnarray}
Finally we can make the following substitutions for the terms involving $G_2$, $G_2^+$ and $G_3$, which follow by considering the different possibilities for the vertices furthest from $G$ and whether or not they are in $W$ as was done in Equation (\ref{e:pcond}) and substituting other terms until only those involving $G$ and $G_1$ are left:
\begin{eqnarray*}
J(G_2) &=& xJ(G_1^+) + xy J(G) \\
J(G_2^+) &=& (x+y)J(G_1^+) +J(G_1^-) +xyJ(G) \\
J(G_3) &=& (x+y)J(G_2^+) + (x+1)y J(G_1^+) + (xy+1)J(G_1^-)\\
&=& (x^2+2xy+y)J(G_1^+) + (xy+1)J(G_1^-) +xy(x+y)J(G)
\end{eqnarray*}
These substitutions show that $J(L_1)-J(L_2)=0$ as required.

\section{Addition of a 4-cycle to a graph in two ways}

Looking at the small graphs, most do not have a non-isomorphic graph with the same polynomial, unlike the standard domination polynomial, but there is one circumstance which does arise often and can be explained as follows.

Suppose we have a graph $M$ and choose two of its vertices $a$ and $b$ which are not adjacent. We will add two vertices $c$ and $d$ of valency 2 to $M$ in two different ways to form two graphs $M_1$ and $M_2$ and we will show that, under some special circumstances, that $J(M_1)=J(M_2)$.

To form $M_1$ we add the edges $ab$, $bc$, $cd$ and $ad$ and to form $M_2$ we instead add $ac$, $bc$, $ad$ and $bd$ as shown in Figure \ref{f:squareadd}. 
We can see that $M_1$ and $M_2$ are not isomorphic so long as both $a$ and $b$ have at least one edge from them since then the number of vertices of valency 2 adjacent to all vertices of valency at least 3 is two more in $M_2$ than $M_1$.

\begin{figure}[h]
\begin{center}
\begin{tikzpicture}[scale=0.7]
\node[math] () at (-1,2.5) {$M_1 := $};
    \foreach \pos/\name in {{(1,4)/a}, {(1,1)/b}, {(4,4)/d}, {(4,1)/c}}
        \node[vertex] (\name) at \pos {$\name$ };
    \foreach \pos/\name in {{(.3,4)/e}, {(1,4.7)/f}, {(.3,1)/g}, {(.5,.5)/h}, {(1,.3)/i}}
        \node[nvertex] (\name) at \pos { };
    \foreach \source/\dest in {e/a, f/a, b/g, b/h, b/i,a/b,a/d,c/d,b/c}
        \path[edge] (\source) -- (\dest);
\end{tikzpicture}
~~\begin{tikzpicture}[scale=0.7]
\node[math] () at (-1,2.5) {$M_2 := $};
    \foreach \pos/\name in {{(1,4)/a}, {(1,1)/b}, {(4,4)/d}, {(4,1)/c}}
        \node[vertex] (\name) at \pos {$\name$ };
    \foreach \pos/\name in {{(.3,4)/e}, {(1,4.7)/f}, {(.3,1)/g}, {(.5,.5)/h}, {(1,.3)/i}}
        \node[nvertex] (\name) at \pos { };
    \foreach \source/\dest in {e/a, f/a, b/g, b/h, b/i,a/c,a/d,b/c,b/d}
        \path[edge] (\source) -- (\dest);
\end{tikzpicture}
\end{center}
\caption{Two ways to add two vertices and four edges to a graph $M$}\label{f:squareadd}
\end{figure}
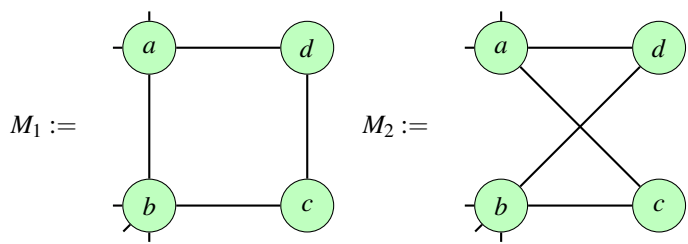

With regards sets and domination, first note that every set which contains at least two of $a$, $b$, $c$ and $d$ will necessary dominate all four of these vertices and any subset $S$ of $V(M)\backslash \{a,b\}$ when extended by such a set to either $M_1$ or $M_2$ will dominate the same number of vertices, so that the contributions to $J(M_1)$ and $J(M_2)$ are the same.
We can draw a similar conclusion when no vertices of $\{a,b,c,d\}$ are in a subset of $V(M_1)$ or $V(M_2)$ or when there are no neighbours of $a$ and $b$ in $S$.
Hence we are left to consider the case when exactly one vertex from $\{a,b,c,d\}$ is in a subset of $V(M_1)$ or $V(M_2)$, and we need to consider what that contributes to each polynomial.

\begin{thm}
If $M$ is a graph (which does not have an edge between vertices $a$ and $b$) such that every vertex in $N(a) \cap N(b)$ is adjacent to every vertex in $M$ and all vertices in $N(a)\backslash N(b)$ are adjacent to all vertices in $N(b)$ (or vice versa) then $J(M_1) = J(M_2)$.
\end{thm}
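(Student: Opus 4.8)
The plan is to complete the reduction sketched just before the statement. Contributions to $J(M_1)$ and $J(M_2)$ from subsets $W$ that meet $\{a,b,c,d\}$ in at least two vertices, or in none, or whose trace on $V(M)$ contains no neighbour of $a$ and no neighbour of $b$, already agree; so I would fix a set $S\subseteq V(M)\setminus\{a,b\}$ and compare, for the four singleton extensions $W=S\cup\{t\}$ with $t\in\{a,b,c,d\}$, the quantity $\sum_t x^{|S|+1}y^{|N(W)|}$ in each graph. Every such $W$ in the remaining case arises exactly once this way, and since $|W|$ is constant only the exponents $|N(W)|$ matter.

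First I would tabulate these exponents. Because the only new edges lie within $\{a,b,c,d\}$, the portion of $N(W)$ inside $V(M)\setminus\{a,b\}$ is identical in $M_1$ and $M_2$: it is a fixed set $P_0:=(N(S)\setminus S)\setminus\{a,b\}$ when $t\in\{c,d\}$, and $P_0$ enlarged by $N(a)\setminus S$ (resp. $N(b)\setminus S$) when $t=a$ (resp. $t=b$). Putting $p_0:=|P_0|$, $p_a:=|P_0\cup(N(a)\setminus S)|$, $p_b:=|P_0\cup(N(b)\setminus S)|$, and letting $\alpha,\beta\in\{0,1\}$ record whether $S$ contains a neighbour of $a$, resp. of $b$, a short check of which of the three remaining vertices of $\{a,b,c,d\}$ are dominated produces all eight exponents. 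The outcome is that the per-$S$ contribution to $J(M_1)-J(M_2)$, after removing the common factor $x^{|S|+1}y^2$, telescopes to
\[
(1-y^{\beta})\bigl(y^{p_a}-y^{p_0}\bigr)+(1-y^{\alpha})\bigl(y^{p_b}-y^{p_0}\bigr).
\]

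It then suffices to show each summand vanishes for every $S$, i.e. that $\beta=1$ forces $p_a=p_0$ and $\alpha=1$ forces $p_b=p_0$ (the case $\alpha=\beta=0$ being automatic). Both reduce to a single structural fact that the hypotheses are designed to supply: every vertex of $N(a)$ is adjacent to every distinct vertex of $N(b)$. Indeed, a common neighbour $u\in N(a)\cap N(b)$ is universal in $M$ by the first hypothesis, while a vertex $u\in N(a)\setminus N(b)$ is adjacent to all of $N(b)$ by the second hypothesis; the ``or vice versa'' clause yields the same conclusion with the roles of $a$ and $b$ interchanged. Granting this, if $\beta=1$ pick $s\in S\cap N(b)$; then any $w\in N(a)\setminus S$ satisfies $w\sim s$ (as $w\neq s$), so $w\in N(S)$ and hence $p_a=p_0$, and symmetrically when $\alpha=1$.

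The step I expect to be the main obstacle is the neighbourhood bookkeeping behind the telescoped expression: one must verify carefully that $c$ and $d$ are never dominated by $S$, that $a$ and $b$ are dominated exactly when $\alpha$, resp. $\beta$, holds, and that across the four choices of $t$ the two graphs' exponents differ only in the paired fashion shown above. Once that identity is in hand, the geometric hypotheses are visibly exactly strong enough to force $p_a=p_0$ and $p_b=p_0$, and the proof closes.
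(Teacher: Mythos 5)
Your proposal is correct, and at bottom it is the same argument as the paper's: fix $S\subseteq V(M)\setminus\{a,b\}$, compare the four singleton extensions, and use the adjacency hypotheses to show that whenever $S$ meets $N(b)$ every vertex of $N(a)\setminus S$ is already dominated by $S$ (and vice versa). I checked your exponent table: writing the eight exponents as $p_a+2$, $p_b+2$, $p_0+2+\alpha$, $p_0+2+\beta$ for $M_1$ against $p_a+2+\beta$, $p_b+2+\alpha$, $p_0+2$, $p_0+2$ for $M_2$, the difference is exactly $(1-y^{\beta})(y^{p_a}-y^{p_0})+(1-y^{\alpha})(y^{p_b}-y^{p_0})$ as you claim, and your structural lemma (every vertex of $N(a)$ is adjacent to every distinct vertex of $N(b)$) kills each summand. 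Where you differ is only in packaging: the paper splits into two cases according to whether $S$ dominates both of $a,b$ or just one, and matches contributions term by term (with a $b\leftrightarrow c$ swap in the second case — note the paper's Case 2 actually writes $d$ where it means $c$ in its final sentence, since in $M_1$ the vertex $d$ is not adjacent to $b$). Your single identity absorbs both cases plus the trivial $\alpha=\beta=0$ case, which is a genuine tidying-up; the cost is that the reader must trust the bookkeeping behind the eight exponents rather than a verbal walk through each vertex, but as verified above that bookkeeping is sound.
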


\begin{proof}
Suppose $M$ is such a graph and let $S$ be a subset of vertices from $V(M) \backslash \{a,b\}$. 

{\bf Case 1:~}
Suppose that either there is a vertex from $S$ in $N(a) \cap N(b)$ or there are vertices
in both $N(a) \backslash N(b)$ and $N(b) \backslash N(a)$ in $S$. 

In both cases, by the conditions on $M$, all vertices in $M$ within $N(a)\cup N(b)$ are either in $S$ or are dominated by a vertex from $V(M)$, and in particular $a$ and $b$ are both dominated by vertices in $S$.
Now, let us consider which vertices in $M_1$ and $M_2$ will now be dominated by $S$ depending on which single vertex from $\{a,b,c,d\}$  is added to $S$. The vertices from $S \backslash \{a,b,c,d\}$ will all contribute the same for both $M_1$ and $M_2$ since all neighbours of $a$ and $b$ are already dominated, and $a$ and $b$ would both contribute a factor of $y$ before considering which vertex from $\{a,b,c,d\}$ is added.

If $a$ is added then only $d$ is newly dominated in $M_1$, so only an extra factor of $x$ is created in the polynomial, but for $M_2$ both $c$ and $d$ are dominated by $a$ so we get $xy$ as an extra factor. A similar situation occurs when $b$ is added. However, when $c$ or $d$ is added in $M_1$ we will get $xy$ as a factor, but for $M_2$ the only factor is $x$ since $a$ and $b$ are already dominated by the vertices in $S$. Thus the factor which is contributed to both $J(M_1)$ and $J(M_2)$ is $2x+2xy$ by the sets with one vertex added.

{\bf Case 2:~}
Suppose that either there are no vertices from $S$ in $N(b) \cap N(a)$ and, say, there is no vertex from $N(b)$ in $S$, but there exists a vertex $v\in N(a) \backslash N(b)$ in $S$. Note that all vertices in $N(b)$ are dominated by $v$ by the second condition on $M$ from the theorem.

Similarly to before, the contribution from $a$ and $b$ before adding a vertex from $\{a,b,c,d\}$ is $y$ from $v$ dominating $a$. If $a$ is the vertex being added then, in $M_1$, both $b$ and $d$ are newly dominated, as well as any neighbours of $a$ previously undominated by $S$. However, for $M_2$ we get the same contribution, since $c$ and $d$ are newly dominated. With $d$ added, we get a contribution of $xy$ since $a$ was already dominated, but $d$'s other neighbour is undominated, for both $M_1$ and $M_2$.

When $b$ is added instead, in $M_1$ we only get an additional factor of $xy$ since all of $b$'s other neighbours were already dominated. For $M_2$ we get $xy^2$ though, since both $c$ and $d$ are newly dominated. This is matched in reverse by what happens when we add $d$; for $M_1$ we get $xy^2$ since $d$ is newly added and $b$ and $c$ were undominated, but for $M_2$ only $b$ was undominated, so we only get $xy$. 

Thus the contributions to $J(M_1)$ and $J(M_2)$ are the same and we can make a symmetric argument for when there is no vertex from $N(a)$ in $S$ in the same way. In all circumstances the polynomials for $M_1$ and $M_2$ are the same.
\end{proof}

\nocite{*}

\section{Conclusion}
We have shown that by extending the definition of the domination polynomial that we are able to still calculate the new polynomial using similar recurrence relations and conditions. However only a few graphs do not have a unique bivariate polynomial and we have exhibited two ways that can occur.

Some open questions that arise from this work are as follows:
\begin{enumerate}
\item
Are there other similar infinite familes of graphs with the same bivariate domination polynomial and is there an easier way to show they are the same?
\item
Is there any significance for the polynomial $J(G; x, 1-x)$? The term $(1+x+y)$ appears often as a denominator in Section \ref{s:recur}. Considering the term $(x+y)$, the polynomial $J(G,x,-x)$ is related to the domination polynomial via Equation (\ref{e:disco}).
\end{enumerate}

\end{document}